\theoremstyle{plain}
\newtheorem{thm}{Theorem}[section]
\newtheorem{prop}[thm]{Proposition}
\newtheorem{con}[thm]{Conjecture}
\theoremstyle{remark}
\newtheorem*{rem*}{Remark}
\newcommand{\N}{{\mathbb{N}}} 
\DeclareSymbolFont{bbold}{U}{bbold}{m}{n}
\DeclareSymbolFontAlphabet{\mathbbold}{bbold}
\numberwithin{equation}{section}
\newcommand{\Addresses}{{
  \bigskip
  \footnotesize

	\noindent
  Sigrid Grepstad, \textsc{Department of Mathematical Sciences, Norwegian University of 
	Science and Technology, 7491 Trondheim, Norway.} 
	\par\nopagebreak
	\noindent \textit{E-mail address: }\texttt{sigrid.grepstad@ntnu.no}

  \medskip
	\noindent
  Lisa Kaltenb\"ock and Mario Neum\"uller, \textsc{Department of Financial Mathematics and applied 
	Number Theory, Johannes Kepler University, Altenbergerstra{\ss}e 69, 4040 
	Linz, Austria}
  \par\nopagebreak
  \noindent \textit{E-mail addresses: }\texttt{lisa.kaltenboeck@jku.at} and 
	\texttt{mario.neumueller@jku.at}}}
\title{On the asymptotic behaviour of the sine product $\prod_{r=1}^n |2\sin \pi r \alpha|$}
\author{Sigrid Grepstad, Lisa Kaltenb\"ock and Mario Neum\"uller \thanks{Sigrid Grepstad is supported in part by Grant 275113 of the Research Council of Norway.
Lisa Kaltenb\"{o}ck and Mario Neum\"{u}ller are funded by the Austrian Science Fund (FWF): Project F5507-N26 and Project F5509-N26, which are both part of the Special Research 
Program ``Quasi-Monte Carlo Methods: Theory and Applications''. 
}}
\date{}
\begin{document}


	\maketitle
	
  \begin{abstract}
	In this paper we review recently established results on the asymptotic behaviour	of the trigonometric product $P_n(\alpha) = \prod_{r=1}^n |2\sin \pi r \alpha|$ as $n\to \infty$. We focus on irrationals $\alpha$ whose continued fraction coefficients are bounded. Our main goal is to illustrate that when discussing the regularity of $P_n(\alpha)$, not only the boundedness of the coefficients plays a role; also their size, as well as the structure of the continued fraction expansion of $\alpha$, is important. 
	\end{abstract}
	
  \centerline{\begin{minipage}[hc]{130mm}{
	{\em Keywords:} Trigonometric product, Ostrowski representation, Kronecker sequence, golden ratio\\
	{\em MSC 2010:} 26D05, 41A60, 11B39 (primary), 11L15, 11K31 (secondary)}
	\end{minipage}}

\section{Introduction}\label{sec:intro}
The trigonometric product 
\begin{equation*}
P_n(\alpha) = \prod_{r=1}^n |2\sin \pi r \alpha|
\end{equation*}
has been subject to mathematical investigations for more than 50 years. It arises naturally in a number of mathematical fields, such as partition theory, Pad\'{e} approximation and discrepancy theory. Of particular interest is the asymptotic behaviour of $P_n(\alpha)$ as $n \to \infty$, which has proven surprisingly difficult to determine. In Figure \ref{fig:spex}, we have plotted $P_n(\alpha)$ for $n=1, \ldots , 250$ and different values of irrational $\alpha$. These plots illustrate the chaotic nature of the product sequence $P_n(\alpha)$. Yet we see that for certain values of $\alpha$, there is some self-similarity in the behaviour of $P_n(\alpha)$ with increasing $n$.
\begin{figure}[htb]
	\includegraphics[width=0.49\linewidth]{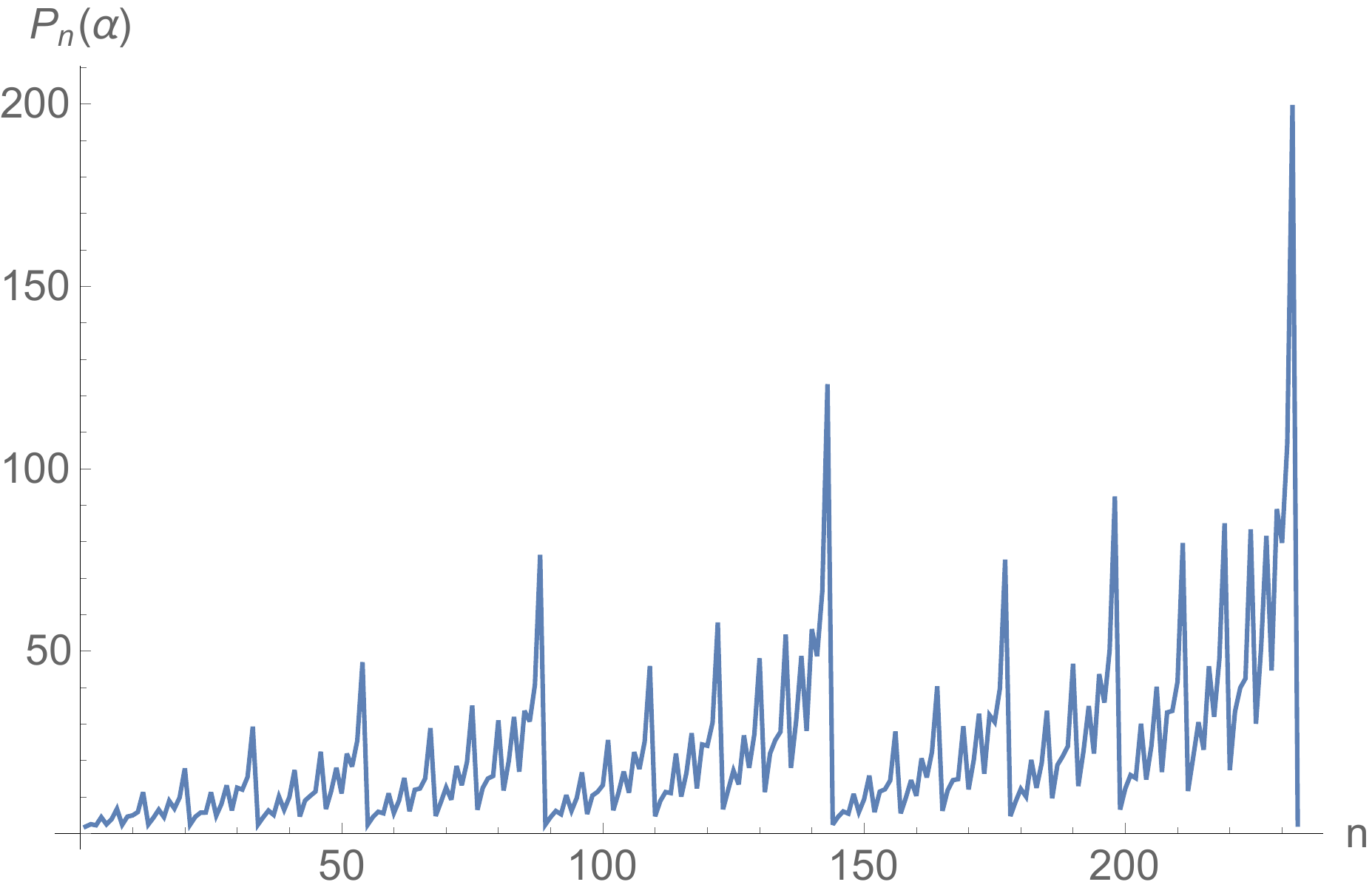}
	\includegraphics[width=0.49\linewidth]{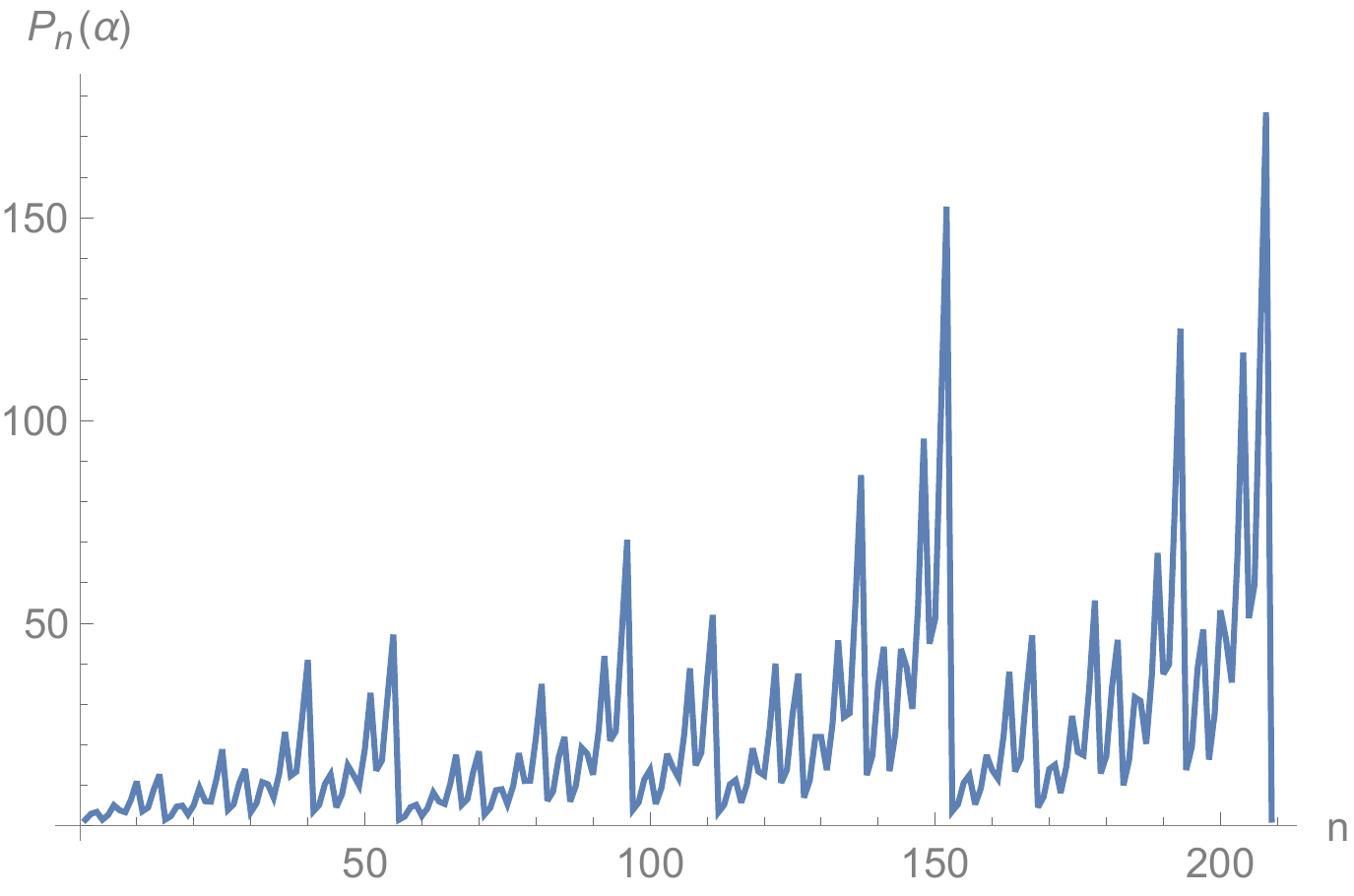}
	\includegraphics[width=0.49\linewidth]{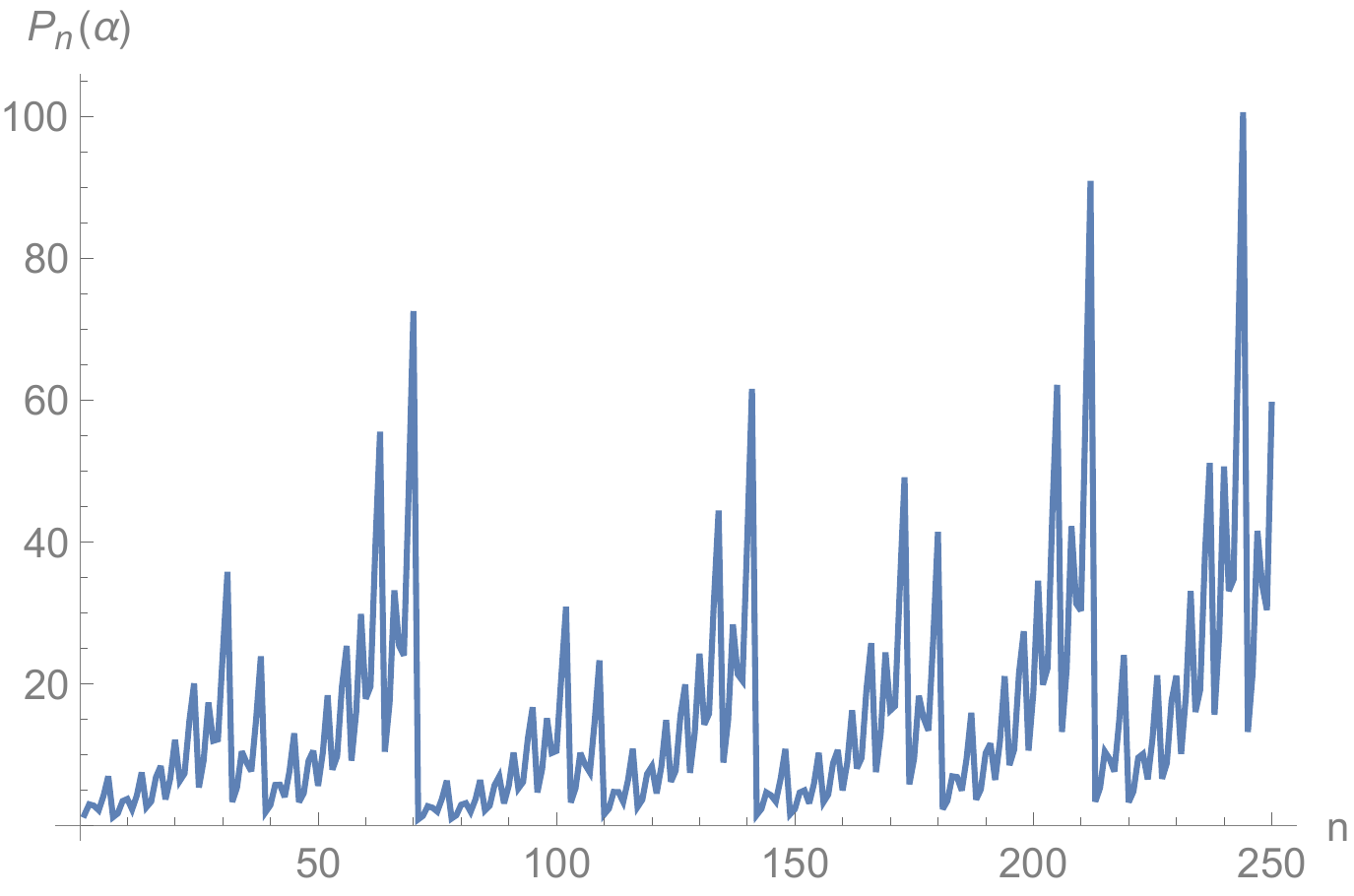}
	\includegraphics[width=0.49\linewidth]{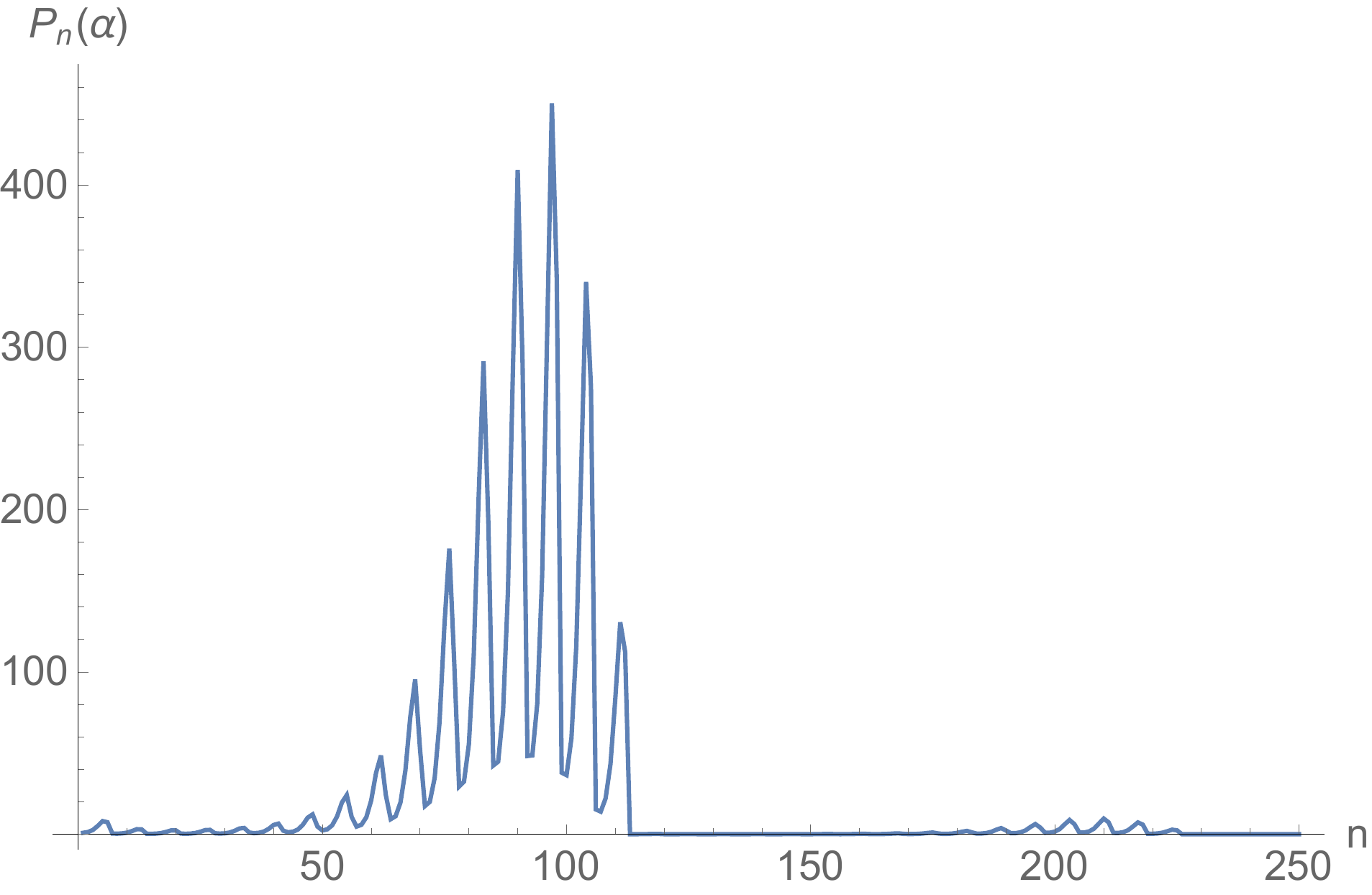}
	\caption{Values of $P_n(\alpha)$ for $\alpha=(\sqrt{5}-1)/2$ (upper left), $\alpha = \sqrt{3}$ (upper right), $\alpha = e$ (lower left) and $\alpha=\pi$ (lower right).}
	\label{fig:spex}
\end{figure}

In this paper we review known bounds on the growth and decay of $P_n(\alpha)$, focusing on breakthroughs in the last 5 years. These recent developments deal mainly with the case when $\alpha$ has bounded continued fraction coefficients. As shown by Lubinsky 20 years ago, this is a case in which the behaviour of $P_n(\alpha)$ is exceptionally regular (see Section~\ref{subsec:cfc}). What recent results have come to reveal, is that also the structure of the continued fraction expansion of $\alpha$ affects regularity. For instance, certain limit phenomena appear only for very structured expansions (see Section \ref{sec:subseqs}). Moreover, and perhaps more surprisingly, also the specific sizes of the continued fraction coefficients play a role. This is evident when discussing the long-standing open question (now resolved) of whether $\liminf_{n\to \infty} P_n(\alpha)=0$ for all irrationals $\alpha$.

\subsection{Growth of $P_n(\alpha)$}
Let us briefly review what is known about the growth of $P_n(\alpha)$ as $n\to \infty$. Note first that if $\alpha=p/q$ is rational, then $P_n(\alpha)=0$ for all $n\geq q$. Moreover, we have that $P_n(\alpha) = P_n(\{\alpha\})$, where $\{\cdot\}$ denotes the fractional part, so we may safely restrict our attention to irrationals $\alpha$ in the unit interval. 

It was established by Sudler \cite{Su64} and Wright \cite{Wr64} in the 1960s that the norm $\| P_n(\alpha)\| = \sup_{0 < \alpha < 1} |P_n(\alpha)|$ grows exponentially as $n\to \infty$, and 
\begin{equation}
\label{eq:expgrowth}
\lim_{n\to \infty} \| P_n(\alpha) \|^{1/n} = C \approx 1.22. 
\end{equation}
(See also \cite{freiman} for an alternative approach and the exact value of $C$.) In light of \eqref{eq:expgrowth}, one might expect that also the pointwise growth of $P_n(\alpha)$ is exponential, but this is not the case. It was shown by Lubinsky and Saff in \cite{LuSaff} that for almost every $\alpha \in (0,1)$, we have 
\begin{equation*}
\lim_{n\to \infty} P_n(\alpha)^{1/n} = 1.
\end{equation*}
In later work, Lubinsky provides a more precise growth bound on $P_n(\alpha)$, namely 
\begin{equation}
\label{eq:uppermax}
\left| \log P_n(\alpha) \right| = O \left( \log n (\log \log n)^{1+\varepsilon} \right)
\end{equation}
for any $\varepsilon>0$, and this holds for almost every $\alpha$ \cite{Lu99}. In the opposite direction, $P_n(\alpha)$ grows almost linearly for infinitely many $n$. We have that 
\begin{equation*}
\limsup_{n\to \infty} \frac{\log P_n(\alpha)}{\log n} \geq 1 
\end{equation*}
for all irrationals $\alpha \in (0,1)$.

\subsection{Significance of the continued fraction expansion}\label{subsec:cfc}
In his 1999 paper \cite{Lu99}, Lubinsky illustrates a significant difference in nature of $P_n(\alpha)$ depending on whether or not the continued fraction expansion of $\alpha$ has bounded coefficients. If this is the case, then there exist positive constants $C_1$ and $C_2$ such that
\begin{equation}
\label{eq:polybounds}
n^{-C_2} \leq P_n(\alpha) \leq n^{C_1},
\end{equation}
i.e.\ $P_n(\alpha)$ can be polynomially bounded (see \cite[Theorem 1.3]{Lu99}). 

When $\alpha$ has \emph{unbounded} continued fraction coefficients, the upper bound in \eqref{eq:uppermax} (valid for almost all such $\alpha$) has yet to be improved upon. Moreover, Lubinsky showed that 
\begin{equation}
\label{eq:unboundedzero}
\liminf_{n\to \infty} P_n(\alpha) = 0
\end{equation}
in this case, and that for almost all $\alpha$ the decay to $0$ is faster than any negative power of $n$ for infinitely many $n$. 

The focus of this paper will be on the more regular case when $\alpha$ has bounded continued fraction coefficients, and on two closely related questions raised by Lubinsky in \cite{Lu99}, namely:\\

\begin{enumerate}
	\item Does \eqref{eq:unboundedzero} still hold in the case of bounded continued fraction coefficients? 
	\item What is the smallest value we can choose for $C_2$ in \eqref{eq:polybounds}?
\end{enumerate}

 Our interest in these questions was sparked by a recent paper by Mestel and Verschueren \cite{MV16}, where the special case $\alpha = (\sqrt{5}-1)/2$ is studied in great detail. We review key results from this paper in Section \ref{sec:subseqs}. Using these key results, we argue in Section \ref{sec:liminf} that for $\alpha=(\sqrt{5}-1)/2$, equality \eqref{eq:unboundedzero} does \emph{not} hold. We will see in Section \ref{sec:extensions} that, in fact, it appears one may choose $C_2=0$ for this specific $\alpha$. In the same section we explain why this simplest choice of $C_2$ cannot possibly be valid for all $\alpha$ with bounded continued fraction coefficients; this was also alluded to by Lubinsky in \cite{Lu99}.

A third question natural to raise is: what is the smallest value we may choose for $C_1$ in \eqref{eq:polybounds}? We firmly believe that for the special case $\alpha = (\sqrt{5}-1)/2$, the answer to this question is $C_1=1$ (see Figure \ref{fig:linear}). More precisely, we believe that $P_n(\alpha) < cn$ for some constant $c>0$ independent of $n$. Upper bounds on $P_n(\alpha)$ will not be the focus of this paper. Nevertheless, we will briefly return to this question for the special case $\alpha=(\sqrt{5}-1)/2$ in Section \ref{sec:liminf}.
\begin{figure}[htb]
	\centering
		\includegraphics[width=0.65\linewidth]{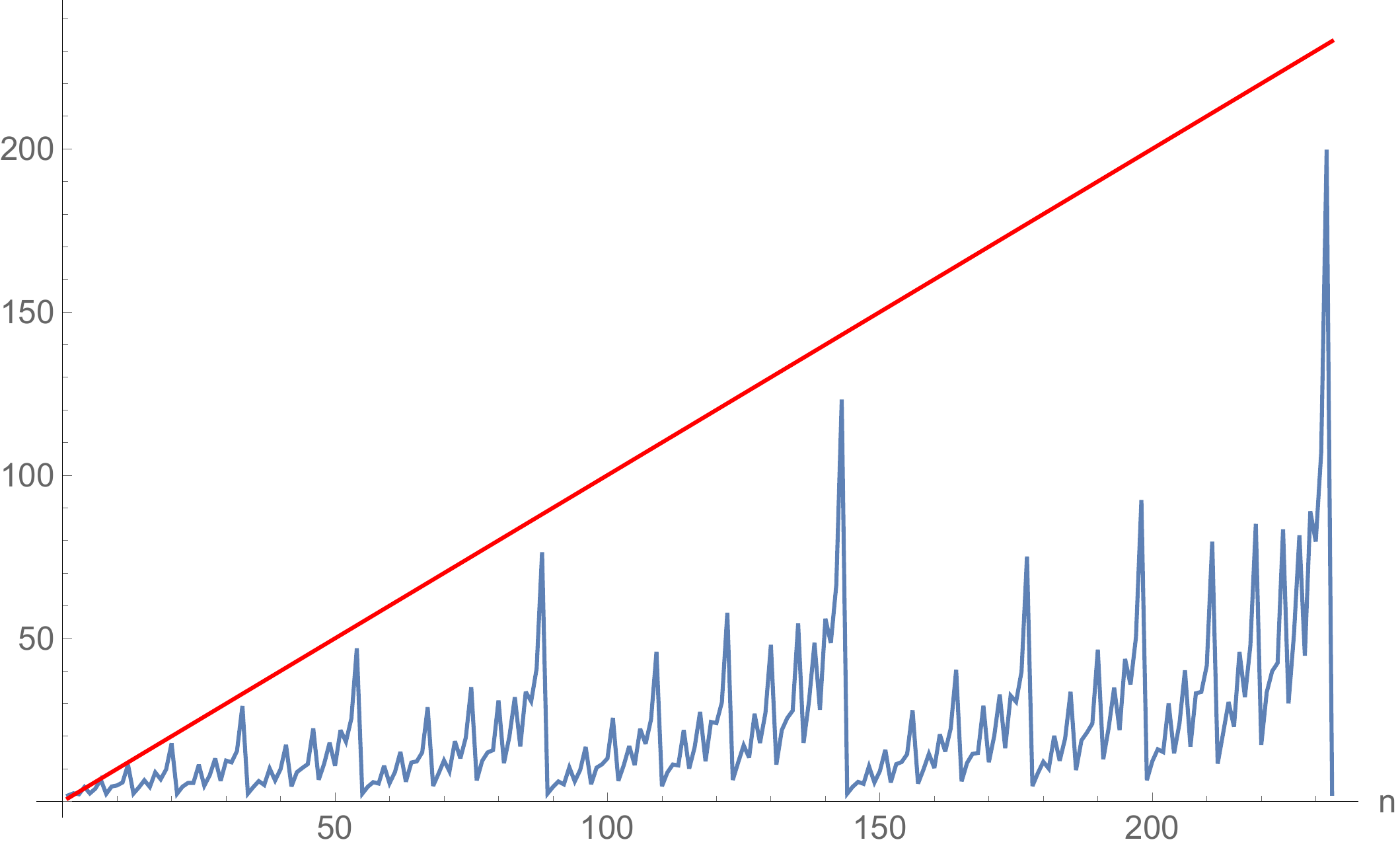}
	\caption{Value of $P_n(\alpha)$ for $\alpha = (\sqrt{5}-1)/2$ (blue line) plotted against $f(n)=n$ (red line).\label{fig:linear}}
\end{figure}


\section{Continued fraction expansions}\label{sec:cfe}
In order to set the notation for the remainder of the paper,
we briefly review some facts about continued fraction expansions. Any irrational $\alpha \in (0,1)$ has a unique and infinite continued fraction expansion 
$$\alpha= \frac{1}{\displaystyle a_1+\frac{1}{\displaystyle a_2+\frac{1}{\displaystyle a_3+ \ldots}}} = [0;a_1, a_2, a_3, \ldots ],$$
where $a_i \in \N$ for all $i \in \N$. A best rational approximation of $\alpha$ is given by $p_n/q_n$, where $p_n$ and $q_n$ are defined recursively by
\begin{align*}
q_0 &=0, \quad q_1=1, \quad  q_{n+1}=a_nq_n+ q_{n-1}; \\
p_0&=1, \quad p_1=0, \quad p_{n+1}=a_np_n + p_{n-1}.
\end{align*}
This approximation is best possible in the sense that for no $q < q_n$ can we find $p \in \N$ such that 
\begin{equation*}
\left| \alpha - \frac{p}{q}\right| < \left| \alpha - \frac{p_n}{q_n} \right| .
\end{equation*}
We call $p_n$ and $q_n$ the best approximation numerator and denominator of $\alpha$, respectively. The fraction $p_n/q_n$ is called the $n$th convergent of $\alpha$, and it is well-known that
\begin{equation}
\label{eq:approxerror}
\left| \alpha - \frac{p_n}{q_n}\right| \leq \frac{1}{q_{n+1}q_n}  .
\end{equation}
Finally, we recall that given a sequence of best approximation denominators $\{q_0, q_1, q_2, \ldots \}$ corresponding to some irrational $\alpha$, any natural number $N$ has a unique \emph{Ostrowski expansion} in terms of this sequence.
\begin{thm}[Ostrowski representation]\label{thm:ostrowski}
Let $\alpha \in (0,1)$ be an irrational with continued fraction expansion $[0;a_1, a_2, \ldots]$ and best approximation denominators $(q_n)_{n\geq 1}$. Then every natural number $N$ has a unique expansion 
\begin{equation}
\label{eq:ostrowski}
N = \sum_{j=1}^z b_j q_j ,
\end{equation}
where 
\begin{enumerate}
\setlength\itemsep{5pt}
\item $0 \leq b_1 \leq a_1-1$ and $0\leq b_j \leq a_j$ for $j>1$.
\item If $b_j = a_j$ for some $j$, then $b_{j-1}=0$.
\item $z = z(N) = O(\log N)$.
\end{enumerate}
We refer to \eqref{eq:ostrowski} as the Ostrowski representation of $N$ in base $\alpha$.
\end{thm}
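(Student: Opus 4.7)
The proof splits into three tasks: existence (handled by a greedy algorithm), uniqueness (handled by a sharp maximum-suffix bound), and the length estimate (iii), which is almost automatic. Since $a_j \geq 1$ for every $j$, the recursion $q_{j+1} = a_j q_j + q_{j-1}$ gives $q_{j+1} \geq q_j + q_{j-1}$, so $q_n$ dominates the Fibonacci sequence and in particular $q_n \geq c\, \varphi^n$ for some $c>0$ and $\varphi = (1+\sqrt{5})/2$. Any valid representation of $N$ with top index $z$ satisfies $q_z \leq N$, so $z = O(\log N)$ once existence is known.

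For existence I would run the greedy algorithm. Given $N \geq 1$, let $z$ be maximal with $q_z \leq N$ (so $N < q_{z+1}$), set $b_z = \lfloor N/q_z \rfloor$, and iterate on the remainder $N' = N - b_z q_z < q_z$. The inequality $N < q_{z+1} = a_z q_z + q_{z-1}$ forces $b_z \leq a_z$, and in the borderline case $b_z = a_z$ we also have $N' < q_{z-1}$, so the next greedy step selects an index strictly less than $z-1$; this keeps (ii) intact. When the running remainder eventually drops below $q_2 = a_1$, the final coefficient $b_1$ is assigned to be at most $a_1 - 1$, giving (i).

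For uniqueness the key lemma is the sharp estimate
\[ \sum_{j=1}^k b_j q_j \leq q_{k+1} - 1 \]
valid for every sequence $(b_j)_{j=1}^k$ satisfying (i) and (ii). This is proved by induction on $k$ with a two-case split on the top coefficient: if $b_k \leq a_k - 1$, the inductive bound gives at most $(a_k-1)q_k + (q_k-1) = a_k q_k - 1 \leq q_{k+1}-1$, while if $b_k = a_k$, condition (ii) forces $b_{k-1} = 0$ and the inductive bound applied to indices $1,\ldots,k-2$ gives at most $a_k q_k + q_{k-1} - 1 = q_{k+1} - 1$. Consequently, the top index $z$ of any representation of $N$ is forced by $q_z \leq N < q_{z+1}$, then $b_z = \lfloor N/q_z \rfloor$ is forced, and uniqueness follows by induction on $N$.

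The subtle point throughout is the bookkeeping around condition (ii). In the greedy step, one must verify that the extremal choice $b_z = a_z$ really pushes the next nonzero index down by at least two (so that the identity $a_j q_j + q_{j-1} = q_{j+1}$ never produces an implicit carry), and in the uniqueness argument, one must see that (ii) is precisely the constraint that rules out competing representations of the same $N$ via that same identity.
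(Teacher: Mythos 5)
The paper does not actually prove this theorem; it only cites Kuipers--Niederreiter \cite[p.~126]{KN74}, and your argument is precisely the standard one found there (and in \cite{RS92}): greedy choice of the largest $q_z\leq N$ for existence, the maximal-digit bound $\sum_{j=1}^{k}b_jq_j\leq q_{k+1}-1$ for uniqueness, and $q_n$ growing at least like Fibonacci for the length estimate. Your proof is correct under the paper's conventions (where $q_2=a_1$, so the constraint $b_1\leq a_1-1$ is exactly what the greedy remainder $R<q_2$ delivers), so there is nothing to add.
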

A proof of Theorem \ref{thm:ostrowski} can be found in \cite[p.~126]{KN74}. For further reading on the Ostrowski expansion, see \cite{AS03} or \cite{RS92}.


\section{Convergence along subsequences}\label{sec:subseqs}
In a recent paper by Mestel and Verschueren \cite{MV16}, the authors give a detailed exposition on the product $P_n(\alpha)$ in the special case when $\alpha = \varphi=(\sqrt{5}-1)/2$ is the (fractional part of the) golden mean. The irrational number $\varphi$ has the simplest possible continued fraction expansion 
$$\varphi= \frac{1}{\displaystyle 1+\frac{1}{\displaystyle 1+\frac{1}{\displaystyle 1+ \ldots}}} = [0;\overline{1}],$$
and the sequence of best approximation denominators of $\varphi$ is the well-known Fibonacci sequence
\begin{equation}
\label{eq:fibonacci}
(F_n)_{n\geq 0} = (0,1,1,2,3,5,8,13, \ldots).
\end{equation}
Mestel and Verschueren give a rigorous proof of an intriguing fact which was observed experimentally in \cite{KnTa2011} by Knill and Tangerman, namely that the subsequence $P_{F_n}(\varphi)$ converges to a positive constant as $n \to \infty$.
\begin{thm}[{\cite[Theorem 3.1]{MV16}}]
\label{thm:mvmain}
Let $\varphi=(\sqrt{5}-1)/2$ and let $(F_n)_{n\geq 0}$ be the Fibonacci sequence in \eqref{eq:fibonacci}. The subsequence $(P_{F_n}(\varphi))_{n\geq 1}$ is convergent, and 
\begin{equation*}
\lim_{n\to \infty} P_{F_n}(\varphi) = \lim_{n\to \infty} \prod_{r=1}^{F_n} \left| 2 \sin \pi r \varphi \right| > 0. 
\end{equation*}
\end{thm}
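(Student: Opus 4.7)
The plan is to isolate the dominant behaviour of $P_{F_n}(\varphi)$ and exhibit it as the product of an explicit geometric factor and a factor asymptotic to $F_n$. Using the classical identity $F_n\varphi - F_{n-1} = (-1)^{n-1}\varphi^n$, I would first split off the last term of the product:
\[
P_{F_n}(\varphi) \;=\; 2\sin(\pi\varphi^n)\cdot A_n, \qquad A_n := \prod_{r=1}^{F_n-1}\bigl|2\sin\pi r\varphi\bigr|.
\]
The prefactor equals $2\sin\pi\varphi^n \sim 2\pi\varphi^n$ and decays geometrically, so the claim reduces to showing $A_n \sim c\,F_n$ for some $c>0$. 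Combined with the Binet-type asymptotic $F_n\varphi^n\to 1/\sqrt 5$, this will yield a strictly positive limit $2\pi c/\sqrt 5$.

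To access $A_n/F_n$, I would compare $A_n$ with the equispaced product $\prod_{k=1}^{F_n-1}\lvert 2\sin\pi k/F_n\rvert = F_n$. Since $\gcd(F_{n-1},F_n)=1$, the map $\sigma_n(r):= rF_{n-1}\bmod F_n$ is a bijection of $\{1,\ldots,F_n-1\}$, and the identity above gives
\[
r\varphi \;\equiv\; \frac{\sigma_n(r)}{F_n} + (-1)^{n-1}\frac{r\varphi^n}{F_n}\pmod 1,
\]
so $A_n/F_n$ becomes a product of ratios
\[
R_{n,r} \;=\; \frac{\bigl|\sin\bigl(\pi\sigma_n(r)/F_n + \pi\delta_{n,r}\bigr)\bigr|}{\bigl|\sin(\pi\sigma_n(r)/F_n)\bigr|},\qquad |\delta_{n,r}|\leq \varphi^n.
\]
At indices where $\sigma_n(r)$ stays away from $0$ and $F_n$, a Taylor expansion delivers
\[
\log R_{n,r} \;=\; \pi\delta_{n,r}\cot(\pi\sigma_n(r)/F_n) + O\!\left(\delta_{n,r}^2/\sin^2(\pi\sigma_n(r)/F_n)\right).
\]
Reindexing the principal cotangent sum by $\sigma_n$ and exploiting the symmetry $\cot(\pi-x)=-\cot x$ across the complete grid $\{k/F_n\}_{k=1}^{F_n-1}$ should produce a main-term cancellation, leaving only a residue that is geometrically small in $\varphi^n$, hence summable across $n$.

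The hard part will be the ``bad'' indices for which $\sigma_n(r)$ is close to $0$ or $F_n$, since there the Taylor expansion of $\log R_{n,r}$ is invalid. By Cassini's identity $F_{n-1}F_{n+1}-F_n^2=(-1)^n$, these indices are precisely the Fibonacci numbers $F_k$ (and certain simple Ostrowski combinations thereof, cf.\ Theorem~\ref{thm:ostrowski}), and there are only $O(n)$ of them in $\{1,\ldots,F_n-1\}$. At such $r=F_k$ both $\lvert\sin\pi r\varphi\rvert = \sin\pi\varphi^k$ and $\lvert\sin(\pi\sigma_n(r)/F_n)\rvert$ are of the same order $\varphi^k$, so the individual $R_{n,r}$ are bounded but do not tend to $1$; they must be handled by a direct, non-perturbative computation. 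The crux is to show that these boundary contributions re-organise, as $n$ varies, into a convergent product, using the self-similar Fibonacci structure. Once this bookkeeping is combined with the main-term estimate from the good indices, one obtains $A_n/F_n\to c>0$ and therefore
\[
\lim_{n\to\infty} P_{F_n}(\varphi) \;=\; \lim_{n\to\infty} 2\sin(\pi\varphi^n)\cdot F_n\cdot\frac{A_n}{F_n} \;=\; \frac{2\pi c}{\sqrt 5}\;>\;0.
\]
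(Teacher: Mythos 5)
Your opening reduction is sound and is in fact the right starting point: the identity $F_n\varphi-F_{n-1}=-(-\varphi)^n$ gives $P_{F_n}(\varphi)=2\sin(\pi\varphi^n)A_n$, the exact evaluation $\prod_{k=1}^{F_n-1}2\sin(\pi k/F_n)=F_n$ is the correct comparison object, and the bijection $\sigma_n(r)=rF_{n-1}\bmod F_n$ is how one passes between the two. This is essentially the skeleton of the Mestel--Verschueren argument that the paper cites for this theorem. The difficulty is that both of the analytic claims you then make are asserted rather than proved, and each one is where the entire content of the theorem lives. First, the ``main-term cancellation'': pairing $r\leftrightarrow F_n-r$ and using $\cot(\pi-x)=-\cot x$ does not annihilate the first-order sum, because the weight $\delta_{n,r}$ is proportional to $r/F_n$ and is \emph{not} symmetric under this pairing; what survives is $\pi(-1)^{n-1}\varphi^n\sum_{r<F_n/2}(2r/F_n-1)\cot(\pi\sigma_n(r)/F_n)$, a weighted Dedekind--Vasyunin-type cotangent sum. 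Its terms with $\sigma_n(r)=v$ small are of size comparable to $F_n/v$, so after multiplying by $\varphi^n\asymp 1/F_n$ each such term is of order $1/v$ -- individually $O(1)$, not geometrically small -- and whether the whole sum is even bounded, let alone summable in $n$, is exactly the delicate number-theoretic question. It cannot be dismissed as a ``residue''.

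Second, the quantitative bookkeeping around the singular indices does not close as stated. The second-order Taylor error summed over all $r$ is $O\bigl(\varphi^{2n}\sum_{k=1}^{F_n-1}\sin^{-2}(\pi k/F_n)\bigr)=O\bigl(\varphi^{2n}F_n^2\bigr)=O(1)$, which is bounded but does not tend to $0$; to make it vanish you must excise all $r$ with $\sigma_n(r)\le C_n$ or $\ge F_n-C_n$ for a cutoff $C_n\to\infty$. But then the excised set has exactly $2C_n\to\infty$ elements (one $r$ per residue value), so it is not the $O(n)$-element set of Fibonacci-indexed points you describe, and the ``direct, non-perturbative computation'' you defer must now handle an unboundedly growing collection of near-singular factors whose individual values do not tend to $1$. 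You correctly identify this reorganisation via the self-similar Fibonacci/Ostrowski structure as ``the crux'', but that reorganisation -- showing the boundary contributions assemble into a convergent product -- is the theorem's actual proof, and the proposal does not supply it. As it stands the argument establishes at best that $\log(A_n/F_n)$ is bounded (which already requires the cotangent-sum estimate), not that it converges.
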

Numerical calculations suggest that the limiting value of $P_{F_n}(\varphi)$ is approximately $2.4$ (see Figure \ref{fig:convergenceex}).

It turns out that the convergence of the subsequence $P_{F_n}(\varphi)$ is not a property specific to the golden mean. The same property can be established for any irrational $\alpha$ with continued fraction expansion $\alpha = [0;\overline{a}] $, and a similar phenomenon is observed for any irrational with a periodic continued fraction expansion.
\begin{thm}[{\cite[Theorem 1.2]{GrNe18}}]
\label{thm:periodcase}
Suppose $\alpha$ has a periodic continued fraction expansion of the form $\alpha=[0;\overline{a_1,a_2, \ldots, a_{\ell}}]$ with period $\ell$, and let $(q_{n})_{n\geq 0}$ be its sequence of best approximation denominators. Then there exist positive constants $C_0, C_1, \ldots, C_{\ell-1}$ such that 
\begin{equation*}
	\lim_{m\to\infty}P_{q_{\ell m+k}}(\alpha)=\lim_{m \to \infty} \prod_{r=1}^{q_{\ell m+k}} \left| 2 \sin \pi r \alpha \right| = C_k
\end{equation*}
for each $k=0,1, \ldots, \ell-1$.
\end{thm}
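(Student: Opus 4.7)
The approach is to generalise the Mestel--Verschueren argument (Theorem \ref{thm:mvmain}) from the golden mean to an arbitrary periodic continued fraction, exploiting the fact that the Gauss shifts $\alpha_n:=[0;a_{n+1},a_{n+2},\ldots]$ satisfy $\alpha_{n+\ell}=\alpha_n$ precisely because of the period-$\ell$ structure. Fixing $k\in\{0,1,\ldots,\ell-1\}$, I study the successive ratios
\[
\Delta_n(\alpha) := \log P_{q_n}(\alpha) - \log P_{q_{n-1}}(\alpha) = \sum_{r=q_{n-1}+1}^{q_n}\log|2\sin\pi r\alpha|,
\]
and aim to show that $\sum_n\Delta_n(\alpha)$ converges when the partial sums are restricted to $n=\ell m+k$, $m\to\infty$, yielding a limit $\log C_k$ with $C_k>0$.

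The main structural tool is the Ostrowski representation (Theorem \ref{thm:ostrowski}), which decomposes each $r\in(q_{n-1},q_n]$ uniquely as $r=jq_{n-1}+r'$ with $1\leq j\leq a_n$ and $0\leq r'<q_{n-1}$. Combining this with $q_{n-1}\alpha = p_{n-1}+(-1)^{n-1}\|q_{n-1}\alpha\|$ and the approximation bound \eqref{eq:approxerror}, one obtains
\[
|2\sin\pi r\alpha| = \bigl|2\sin\bigl(\pi r'\alpha + (-1)^{n-1}\pi j\|q_{n-1}\alpha\|\bigr)\bigr|,
\]
so that the product in $\Delta_n$ can be rewritten as a mildly perturbed product over shorter ranges, with perturbation parameter $\|q_{n-1}\alpha\|\leq 1/q_n$. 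Because of the periodicity $\alpha_{n+\ell}=\alpha_n$, this recursive rewriting has the same qualitative shape every $\ell$ steps, which allows one to identify a residue-dependent fixed-point structure. The classical identity $\int_0^1\log|2\sin\pi x|\,\rd x = 0$ ensures that the leading-order mean vanishes, so that what survives in $\sum_m\Delta_{\ell m+k}$ is a residual whose convergence can be extracted from the recursion.

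The main obstacle lies in controlling those terms $\log|2\sin\pi r\alpha|$ for which $\|r\alpha\|$ is exceptionally small: these arise when the Ostrowski digits of $r$ conspire to place $r\alpha$ very near an integer, i.e.\ when $r$ is close to a convergent denominator $q_j$, contributing $\log\|q_j\alpha\|\asymp -j$. Periodicity forces $q_j\asymp\lambda^j$ for the Pisot root $\lambda$ of the companion matrix of $(a_1,\ldots,a_\ell)$, and there are only $O(n)$ such exceptional indices up to $q_n$; each can in principle be absorbed by the recursive decomposition. The delicate heart of the argument is demonstrating that their cumulative contribution \emph{stabilises} along each residue class modulo $\ell$ (rather than drifting), and that the resulting limit is strictly positive. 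This is where the periodic structure, and not merely the boundedness, of the continued fraction expansion plays the decisive role, mirroring the concrete computation of Mestel and Verschueren for $\varphi=[0;\overline{1}]$ but now complicated by the interplay of $\ell$ interlocked limits.
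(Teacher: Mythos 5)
The paper itself does not prove Theorem \ref{thm:periodcase}; it cites the proof from \cite{GrNe18}, which generalises the Mestel--Verschueren renormalisation argument. Your plan is pointed in the same direction as that work (Ostrowski/convergent decomposition, periodicity of the Gauss shifts, perturbed products over shorter ranges), so the strategy is not misguided. The problem is that what you have written is a route map that stops exactly at the mountain: the two claims that constitute the entire content of the theorem --- that the block sums stabilise along each residue class mod $\ell$, and that the resulting limits are strictly positive --- are announced (``the delicate heart of the argument is demonstrating that their cumulative contribution stabilises'') but never argued. As it stands, nothing in the proposal rules out that $\log P_{q_{\ell m+k}}(\alpha)$ drifts to $-\infty$ or oscillates.

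To see why this is a genuine gap and not a routine verification, note that the block
\[
B_m \;=\; \log P_{q_{\ell(m+1)+k}}(\alpha)-\log P_{q_{\ell m+k}}(\alpha)\;=\;\sum_{r=q_{\ell m+k}+1}^{q_{\ell(m+1)+k}}\log\left|2\sin \pi r\alpha\right|
\]
contains individual terms of size comparable to $\log\|q_j\alpha\|\asymp -j$ for $r$ at or near the convergent denominators $q_j$ in that range; these do \emph{not} tend to zero, so convergence of $\sum_m B_m$ can only come from cancellation inside each block, and one must prove that $B_m\to 0$ fast enough to be summable. This is precisely what the quantitative core of \cite{MV16} and \cite{GrNe18} delivers: one shows that the perturbed products $\prod_{r=1}^{q_n}|2\sin\pi(r\alpha+\varepsilon)|$, viewed as functions of the perturbation $\varepsilon$, converge (uniformly on a suitable domain, at a geometric rate governed by $\|q_n\alpha\|\asymp \lambda^{-n/\ell}$) to limit functions indexed by the residue $n\bmod \ell$, and that these limit functions are bounded away from $0$. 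Your appeal to $\int_0^1\log|2\sin\pi x|\,\rd x=0$ explains only why the leading-order (volume) term vanishes; it gives no control of the error, and the statement that the exceptional near-resonant terms ``can in principle be absorbed by the recursive decomposition'' is exactly the estimate that must be supplied. Until you prove a quantitative version of that absorption --- including uniformity in the perturbation $k_j\alpha$ analogous to \eqref{eq:kbounds} --- you have restated the theorem rather than proved it. (A minor additional point: periodicity gives $q_{n+\ell}\sim\lambda q_n$, i.e.\ $q_j\asymp\lambda^{j/\ell}$ for the dominant eigenvalue $\lambda$ of the period matrix, not $q_j\asymp\lambda^{j}$; this does not affect the substance but should be stated correctly since the geometric rate enters the summability argument.)
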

Adding a preperiod to the continued fraction expansion of $\alpha$ in Theorem~\ref{thm:periodcase} does not alter the conclusion, and accordingly this result extends to all quadratic irrationals $\alpha$. See \cite{GrNe18} for further details. 

In Figure \ref{fig:convergenceex} below, we have plotted the subsequences $P_{q_n}(\alpha)$ for $\alpha = \varphi$ and $\alpha=\sqrt{3}$. In the latter case, the continued fraction expansion of $\alpha$ has period $\ell=2$, and accordingly we observe that the two subsequences $P_{q_{2m}}(\alpha)$ and $P_{q_{2m+1}}(\alpha)$ converge rapidly to two different positive constants. 

\begin{figure}[htb]
\centering
	\includegraphics[width=0.65\linewidth]{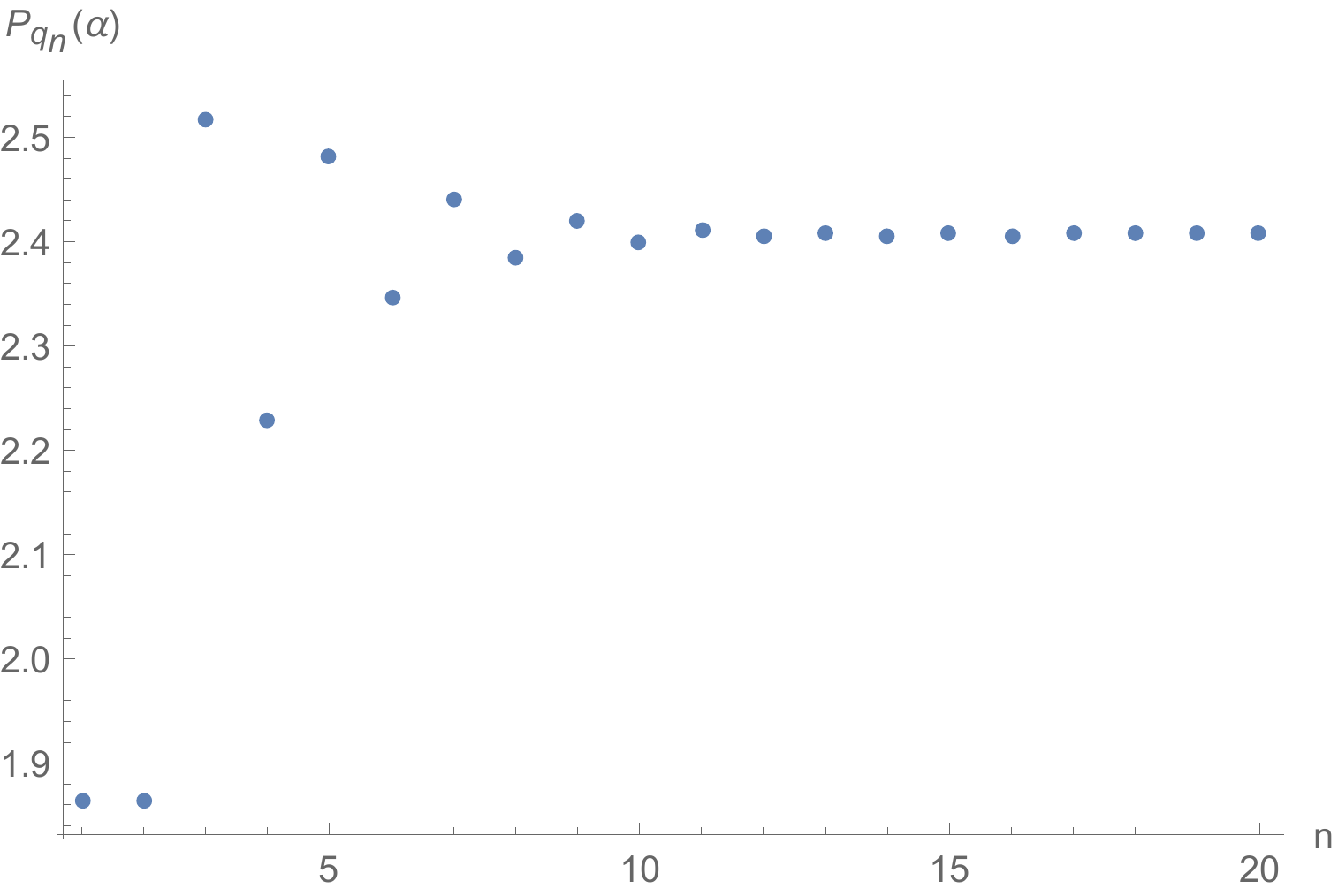}
	\includegraphics[width=0.65\linewidth]{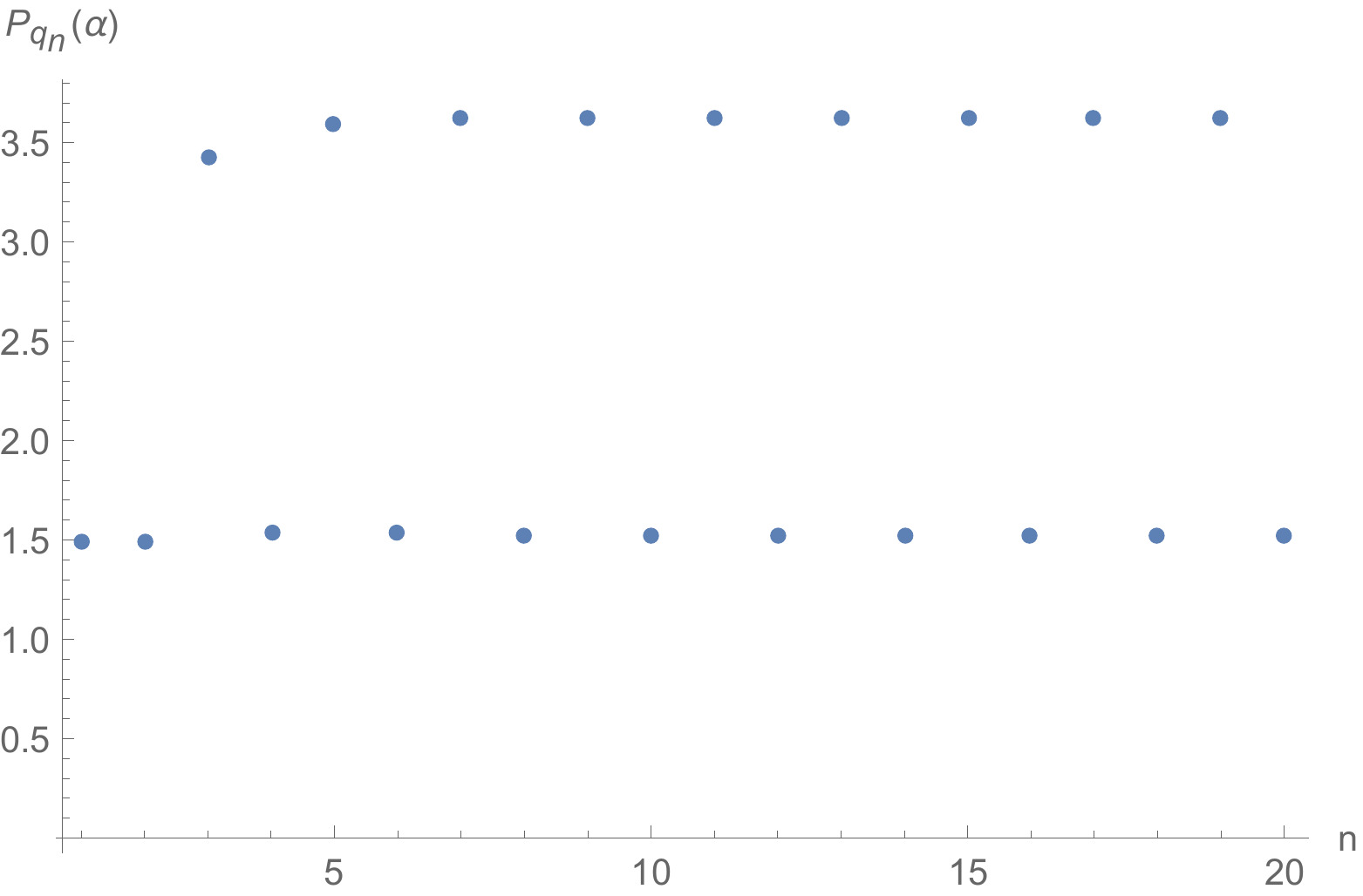}
	\caption{Values of $P_{q_n}(\alpha)$ for $\alpha=\varphi$ (above) and $\alpha=\sqrt{3}=[1;\overline{1,2}]$ (below), where $q_n$ is the $n$th best approximation denominator of $\alpha$.}
	\label{fig:convergenceex}
\end{figure}

\newpage

\section{A positive lower bound for $P_n(\alpha)$}\label{sec:liminf}
The limit phenomenon observed in Theorem \ref{thm:mvmain} sheds new light on the old and long-standing open problem of whether  
\begin{equation}
\label{eq:qliminf}
\liminf_{n\to \infty} P_n(\alpha) = 0 
\end{equation}
for all irrationals $\alpha$. As mentioned in Section \ref{subsec:cfc}, this question was raised by Lubinsky in \cite{Lu99}, but the problem goes back much further; also Erd\H{o}s and Szekeres asked this question already in the 1950s \cite{erdos}. Lubinsky showed that \eqref{eq:qliminf} indeed holds for all $\alpha$ with unbounded continued fraction coefficients, and suggested it is likely that \eqref{eq:qliminf} holds in general. 

However, when $\alpha=\varphi$ is the golden mean, numerics indicate that it is precisely along the subsequence $(F_n)_{n\geq 1}$ of Fibonacci numbers that $P_n(\varphi)$ takes on its minimum values. On the other hand, peaks of $P_n(\varphi)$ appear to be occurring along the subsequence $(F_n-1)_{n\geq1}$. Specifically, numerical calculations are suggesting that
\begin{equation}
\label{eq:possbound}
P_{F_{n-1}}(\varphi) \leq P_N(\varphi) \leq P_{F_n-1}(\varphi)
\end{equation}
for $n\geq 3$ and $N \in \{F_{n-1}, \ldots, F_n-1\}$. This is illustrated in Figure \ref{fig:minmax}.
\begin{figure}[htb]
	\centering
		\includegraphics[width=0.65\linewidth]{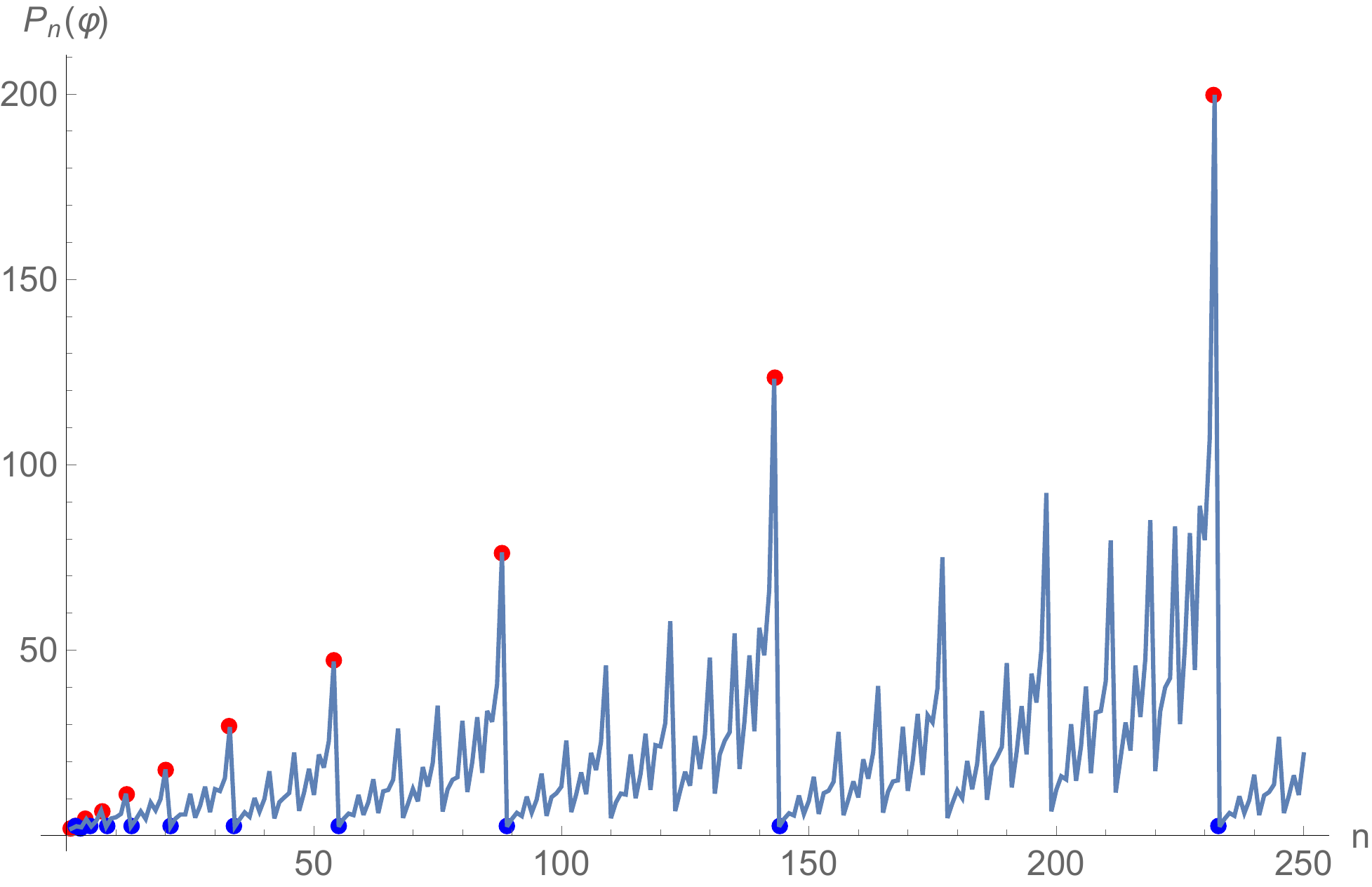}
	\caption{Value of $P_n(\varphi)$, with the two subsequences $P_{F_n}(\varphi)$ and $P_{F_n-1}(\varphi)$ indicated by blue and red marks, respectively.\label{fig:minmax}}
\end{figure}

The inequalities in \eqref{eq:possbound} have two immediate and important consequences. First of all, should the upper bound in \eqref{eq:possbound} hold, then it would follow that the growth of $P_n(\varphi)$ is at most linear. Using the convergence of the subsequence $P_{F_n}(\varphi)$, it is derived in \cite{MV16} that $P_{F_n-1}(\varphi) \leq cF_n$, and combining this with \eqref{eq:possbound} we get
\begin{equation*}
P_{N}(\varphi) \leq cF_n \leq 2cN.
\end{equation*}
Secondly, should the lower bound in \eqref{eq:possbound} hold, then it would follow immediately from Theorem \ref{thm:mvmain} that 
\begin{equation}
\label{eq:neqzero}
\liminf_{n \to \infty} P_n(\varphi) \geq \lim_{n\to\infty}P_{F_n}(\varphi)>0.
\end{equation}
To the best of our knowledge, the inequalities in \eqref{eq:possbound} have not been proven rigorously. Nevertheless, 
it turns out that \eqref{eq:neqzero} can be deduced from Theorem~\ref{thm:mvmain} by a slightly extended argument.
\begin{thm}[{\cite[Theorem 1.1]{GKN19}}]
\label{thm:liminfphi}
If $\varphi=(\sqrt{5}-1)/2$, then 
\begin{equation*}
\liminf_{n\to \infty} P_n(\varphi) = \liminf_{n\to \infty} \prod_{r=1}^n \left| 2 \sin \pi r \varphi \right| > 0. 
\end{equation*}
\end{thm}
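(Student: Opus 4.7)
The plan is to reduce the general case to the Fibonacci subsequence handled by Theorem~\ref{thm:mvmain}, using the Ostrowski (here, Zeckendorf) representation to decompose $P_N(\varphi)$ into Fibonacci-sized blocks, each individually comparable to a term of the convergent subsequence $(P_{F_n}(\varphi))$. Since $\varphi=[0;\overline{1}]$, Theorem~\ref{thm:ostrowski} specialises to the classical statement that every $N\in\N$ is uniquely
\begin{equation*}
N = \sum_{j=1}^{k} F_{n_j}, \qquad n_1 > n_2 > \cdots > n_k \geq 2, \qquad n_{j-1} - n_j \geq 2.
\end{equation*}
Setting $M_0 = 0$ and $M_j = F_{n_1}+\cdots+F_{n_j}$, I would split
\begin{equation*}
P_N(\varphi) = \prod_{j=1}^{k} B_j, \qquad B_j := \prod_{s=1}^{F_{n_j}} \bigl|2\sin\pi(M_{j-1}+s)\varphi\bigr|.
\end{equation*}
The leading block $B_1$ is exactly $P_{F_{n_1}}(\varphi)$, which is bounded below by Theorem~\ref{thm:mvmain}.

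Next, I would compare each subsequent block $B_j$ with $P_{F_{n_j}}(\varphi)$ by a shift argument. The elementary identity $F_m \varphi - F_{m-1} = (-1)^{m+1}\varphi^m$ yields $M_{j-1}\varphi \equiv \delta_{j-1}\pmod 1$ with
\begin{equation*}
|\delta_{j-1}| \leq \sum_{i<j} \varphi^{n_i}.
\end{equation*}
The non-consecutivity $n_{j-1}\geq n_j+2$ forces $|\delta_{j-1}| = O(\varphi^{n_j+2})$, which is geometrically smaller than the smallest factor appearing in $P_{F_{n_j}}(\varphi)$, namely $|2\sin\pi F_{n_j}\varphi|\asymp\varphi^{n_j}$. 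Using $\sin\pi(s\varphi+\delta_{j-1}) = \sin\pi s\varphi\cos\pi\delta_{j-1}+\cos\pi s\varphi\sin\pi\delta_{j-1}$, the ratio of corresponding factors in $B_j$ and $P_{F_{n_j}}(\varphi)$ is
\begin{equation*}
R_{j,s} \;=\; \bigl|\cos\pi\delta_{j-1} + \sin\pi\delta_{j-1}\cot\pi s\varphi\bigr|,
\end{equation*}
and hence $P_N(\varphi) = \bigl(\prod_{j} P_{F_{n_j}}(\varphi)\bigr)\cdot\bigl(\prod_{j\geq 2}\prod_{s} R_{j,s}\bigr)$.

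The problem now reduces to showing uniform positive lower bounds on both products, independently of $N$ (and hence of $k$, which may grow like $\log N$). For the first factor, Theorem~\ref{thm:mvmain} together with a separate check of the small-index values of $P_{F_m}(\varphi)$ should deliver a uniform bound $P_{F_m}(\varphi)\geq 1$, making the (multiplicative) product trivially bounded below. The main obstacle is the correction product. The delicate contributions come from those $s$ where $|\sin\pi s\varphi|$ is very small; by the three-distance theorem these are precisely the $s$ lying close to some $F_m$ with $m\leq n_j$, giving $|\cot\pi s\varphi|\asymp\varphi^{-m}$. Combined with $|\delta_{j-1}|=O(\varphi^{n_j+2})$, each such term perturbs $R_{j,s}$ from $1$ by at most $O(\varphi^{n_j-m+2})$, and the remaining $s$ contribute perturbations of order $|\delta_{j-1}|^2$ or smaller. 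The crucial step is to sum $\log R_{j,s}$ carefully over $j$ and $s$ and verify that the resulting double sum converges absolutely, with a bound independent of the Zeckendorf data $(n_1,\dots,n_k)$. This relies on the strict geometric hierarchy $|\delta_{j-1}|\ll \varphi^{n_j}$ inherited from the non-consecutivity of the Zeckendorf digits, and on the self-similar structure of the near-integer multiples of $\varphi$. Once this is established, $\prod_{j,s} R_{j,s}\geq c_0>0$ uniformly, yielding $P_N(\varphi)\geq c>0$ and completing the proof.
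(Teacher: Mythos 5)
Your skeleton---Zeckendorf decomposition of $N$, factorisation of $P_N(\varphi)$ into shifted blocks of Fibonacci length, and the observation that the shift $\delta_{j-1}=\|M_{j-1}\varphi\|$ is geometrically small compared to the scale $\varphi^{n_j}$ of the $j$-th block---is exactly the decomposition used in \cite{GKN19} (it is the double product \eqref{eq:doubleprod} of the paper). The divergence, and the gap, lies in how you recombine the blocks. Your ``crucial step'' is that $\sum_{j\geq 2}\sum_{s}\log R_{j,s}$ converges \emph{absolutely} with a bound independent of the Zeckendorf data. This is false, already for a single block. Take $N=F_{n+2}+F_{n}$, so that the second block has shift $|\delta_1|\asymp\varphi^{n+2}$. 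For each scale $m\leq n$ there are $\asymp F_{n}\varphi^{m}\asymp\varphi^{m-n}$ indices $s\leq F_{n}$ with $\|s\varphi\|\asymp\varphi^{m}$, and each contributes $|\log R_{2,s}|\asymp|\delta_1|\,\varphi^{-m}\asymp\varphi^{n+2-m}$; the scale-$m$ terms therefore contribute $\asymp\varphi^{2}$ in total, and summing over the $\asymp n$ scales gives $\sum_{s}|\log R_{2,s}|\asymp n\to\infty$. The signed sum stays bounded only because of cancellation between the $s$ with $\{s\varphi\}$ near $0^{+}$ and near $1^{-}$, where $\cot\pi s\varphi$ has opposite signs; exploiting that cancellation is precisely the hard analysis of \cite{MV16}, not something one gets for free. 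A global obstruction points the same way: for $N=F_{2n+1}-1=\sum_{j=1}^{n}F_{2j}$ one has $\prod_{j}P_{F_{2j}}(\varphi)\asymp(2.4)^{n}\asymp N^{0.91}$ while $P_N(\varphi)=O(N)$ and the peaks grow linearly, so the correction product $P_N/\prod_j P_{F_{n_j}}$ is \emph{unbounded} over $N$ and cannot come from a uniformly absolutely convergent double sum.

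The paper's proof avoids this entirely by never asking the correction product to converge. It imports from \cite[p.~220--221]{MV16} the uniform two-sided bound $K_1\leq B_j\leq K_2$ valid for \emph{every} perturbed block, and then proves only the one-sided, per-block statement that $B_j\geq 1$ once the block index exceeds a fixed threshold $J$ independent of $N$ --- this is where the convergence $P_{F_n}(\varphi)\to c\approx 2.4$ enters: there is a whole factor of $c>1$ of room, so the perturbation need only be shown not to destroy more than that, rather than to be asymptotically negligible in a summable sense. The conclusion $P_N(\varphi)\geq K_1^{J}$ then follows with no control on how large $\prod_j B_j$ may get. If you want to complete your argument along your own lines, you must either prove the one-sided bound $\sum_{j,s}\log R_{j,s}\geq -C$ (which requires the cotangent-sum cancellation estimates) or restructure as above; the auxiliary claim $P_{F_m}(\varphi)\geq 1$ is believable but is also not needed once you work with the perturbed blocks directly.
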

The main idea in the proof of Theorem \ref{thm:liminfphi} is rather simple: 
For any $N\in \N$, let $N = \sum_{j=1}^{m} F_{n_j}$ be its Ostrowski representation in base $\varphi$ (also known as its Zeckendorf representation \cite{Z92}). We may then express $P_N(\varphi)$ as the double product
\begin{equation}
\label{eq:doubleprod}
P_N(\varphi) = \prod_{r=1}^N \left| 2\sin \pi r \varphi \right| = \prod_{j=1}^m \prod_{r=1}^{F_{n_j}} \left| 2\sin \pi(r\varphi+k_j\varphi)\right|,
\end{equation}
where $k_j = \sum_{s=j+1}^m F_{n_s}$ for $1\leq j\leq m-1$ and $k_m=0$. Observe that the inner product on the right hand side in \eqref{eq:doubleprod} is a perturbed version of $P_{F_j}(\varphi)$. It was shown in \cite[p.~220-221]{MV16} that for these perturbed products, there exist constants $0<K_1\leq 1 \leq K_2$ such that 
\begin{equation}
\label{eq:kbounds}
K_1 \leq \prod_{r=1}^{F_{n_j}} \left| 2\sin \pi(r\varphi+k_j\varphi)\right| \leq K_2 
\end{equation}
for all $1\leq j \leq m$. Now notice that the fractional part of the perturbation $k_j\varphi$ is tending to zero with increasing values of $j$. This is a consequence of the identity
\begin{equation*}
F_n\varphi = F_{n-1}-(-\varphi)^n.
\end{equation*}
We know from Theorem \ref{thm:mvmain} that the unperturbed sequence $P_{F_{n_j}}(\varphi)$ tends to a constant $c\approx 2.4$ as $j$ increases, and it is thus tempting to suggest that the lower bound $K_1\leq 1$ in \eqref{eq:kbounds} can be raised to some value greater than $1$ if $j$ is chosen sufficiently large. Indeed it turns out that  
\begin{equation*}
\prod_{r=1}^{F_{n_j}} \left| 2\sin \pi (r\varphi+k_j\varphi)\right| \geq 1, 
\end{equation*}
for all $j$ greater than some threshold value $J \in \N$ (independent of $N$), and accordingly it follows from \eqref{eq:doubleprod} and \eqref{eq:kbounds} that
\begin{equation*}
P_N(\varphi) \geq K_1^J
\end{equation*}
for all $N\in \N$.

For a detailed exposition of the proof of Theorem \ref{thm:liminfphi}, see \cite{GKN19}.


\section{Possible extensions of Theorem \ref{thm:liminfphi}}\label{sec:extensions}
We have now seen that $\liminf_{n\to \infty} P_n(\alpha) = 0$ fails for the golden mean $\alpha=\varphi$, and it is natural to ask whether  
\begin{equation*}
\liminf_{n\to \infty} P_n(\alpha) > 0 
\end{equation*}
also for other irrationals. Since the fact that $\liminf_{n\to \infty} P_n(\varphi)>0$ is deduced from Theorem \ref{thm:mvmain}, and Theorem \ref{thm:mvmain} has a natural extension to quadratic irrationals (Theorem \ref{thm:periodcase}), one is led to guess that Theorem \ref{thm:liminfphi} might be generalized to all quadratic irrationals. Unfortunately, this is too much to hope for. 
\begin{thm}
\label{thm:negresult}
Let $\alpha = [0;a_1, a_2, \ldots ]$ have bounded continued fraction coefficients, and let $M = \max_{j\in \N} a_j$. Provided $M$ is sufficiently large, there exists some threshold value $K=K(M)$ such that if $a_j \geq K$ infinitely often, then 
\begin{equation}
\label{eq:eqzero}
\liminf_{n \to \infty} P_n(\alpha) = 0.
\end{equation}
\end{thm}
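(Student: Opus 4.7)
The plan is to construct, for each sufficiently large index $j$ with $a_j \geq K$, a specific $N_j$ for which $P_{N_j}(\alpha)$ is small, and to show the resulting bound tends to zero as $j\to\infty$ along this subsequence. The natural choice is $N_j = q_{j+1}$. Exploiting the recursion $q_{j+1} = a_jq_j + q_{j-1}$ and mimicking the double-product decomposition used in the proof of Theorem~\ref{thm:liminfphi} (cf.\ \eqref{eq:doubleprod}), one obtains
\begin{equation*}
P_{q_{j+1}}(\alpha) = P_{q_{j-1}}(\alpha)\prod_{k=0}^{a_j-1}\prod_{s=1}^{q_j}\bigl|2\sin\pi(s\alpha + \epsilon_k)\bigr|,
\end{equation*}
where $\epsilon_k := \delta_{j-1} + k\delta_j$ and $\delta_n := q_n\alpha - p_n$. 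The convergent identity $\delta_{j+1} = \delta_{j-1} + a_j\delta_j$ combined with the alternating signs of $(\delta_n)$ gives $|\epsilon_k| = |\delta_{j-1}| - k|\delta_j|$ for $k = 0,1,\dots,a_j$, so the $|\epsilon_k|$ decrease monotonically from $|\delta_{j-1}| \leq 1/q_j$ down to $|\delta_{j+1}| \leq 1/q_{j+2}$.

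The first main estimate extracts, from each inner product, the factor at $s = q_j$, which equals $|2\sin\pi(q_j\alpha + \epsilon_k)| = |2\sin\pi\epsilon_{k+1}|$. Collecting these factors across all $k$ and bounding the rising factorial $\prod_{\ell=0}^{a_j-1}(\tau+\ell) \leq a_j!$ with $\tau := |\delta_{j+1}|/|\delta_j| \in (0,1]$, then invoking $|\delta_j|\leq 1/(a_jq_j)$ and Stirling's formula, yields
\begin{equation*}
\prod_{k=0}^{a_j-1}|2\sin\pi\epsilon_{k+1}| \leq e\sqrt{a_j}\left(\frac{2\pi}{e\,q_j}\right)^{a_j}.
\end{equation*}
This decays super-exponentially in $a_j$ as soon as $q_j > 2\pi/e$, i.e.\ for all large $j$.

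The second, more delicate step is the principal obstacle: establishing a uniform bound of the form
\begin{equation*}
\prod_{k=0}^{a_j-1} R(\epsilon_k) \leq D(M)^{a_j}, \qquad R(\epsilon) := \prod_{s=1}^{q_j-1}\bigl|2\sin\pi(s\alpha + \epsilon)\bigr|,
\end{equation*}
with $D(M)$ depending only on the maximum partial quotient $M$. Such a bound cannot hold pointwise, since $R(0) = P_{q_j}(\alpha)/|2\sin\pi q_j\alpha|$ is of order $q_{j+1}$. Instead, as $\epsilon_k$ traverses $[|\delta_{j+1}|, |\delta_{j-1}|]$ in uniform steps $|\delta_j|$, the three-distance theorem implies that only $O(1)$ of the perturbations can simultaneously bring any given secondary best-approximation index close to the origin, and boundedness of the cfc by $M$ caps the number of relevant approximation scales. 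The isolated "spikes" of individual $R(\epsilon_k)$ must therefore cancel against nearby small factors in the product; making this precise is where the detailed combinatorics of the Ostrowski expansion enters.

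Combining both estimates with Lubinsky's polynomial upper bound $P_{q_{j-1}}(\alpha) \leq q_{j-1}^{C_1(M)}$ from \eqref{eq:polybounds}, and using $q_j \geq q_{j-1}$ together with $a_j \leq M$, we arrive at
\begin{equation*}
P_{q_{j+1}}(\alpha) \leq e\sqrt{M}\bigl(2\pi D(M)/e\bigr)^{M} \Big/ q_{j-1}^{\,a_j - C_1(M)}.
\end{equation*}
Setting $K(M) := \lceil C_1(M)\rceil + 1$, the clause "$M$ sufficiently large" precisely ensures $K(M)\leq M$, so that the hypothesis $a_j \geq K(M)$ infinitely often is non-vacuous. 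Along that subsequence the exponent $a_j - C_1(M) \geq 1$ is positive while $q_{j-1}\to\infty$, so the right-hand side tends to $0$, yielding $\liminf_n P_n(\alpha) = 0$.
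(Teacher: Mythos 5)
Your first estimate (extracting the factors at $s=q_j$ and bounding $\prod_{\ell=0}^{a_j-1}(\tau+\ell)$ via Stirling) is fine, but the argument breaks at the step you yourself flag as the ``principal obstacle'': the claimed uniform bound $\prod_{k=0}^{a_j-1}R(\epsilon_k)\leq D(M)^{a_j}$ is not merely unproven, it is false. To see this, take $\alpha$ with a periodic continued fraction expansion. By Theorem~\ref{thm:periodcase} both $P_{q_{j+1}}(\alpha)$ and $P_{q_{j-1}}(\alpha)$ converge to positive constants, so in your identity $P_{q_{j+1}}(\alpha)=P_{q_{j-1}}(\alpha)\prod_k R(\epsilon_k)\prod_k|2\sin\pi\epsilon_{k+1}|$ the product $\prod_k R(\epsilon_k)$ must be comparable to $\bigl(\prod_k|2\sin\pi\epsilon_{k+1}|\bigr)^{-1}\asymp q_{j+1}^{a_j}$ up to constants, which grows without bound and cannot be majorized by any quantity depending only on $M$. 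In other words, the super-exponential gain you obtain in the first step is exactly cancelled by the growth of the remaining factors, and no choice of $D(M)$ rescues the final display.

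There is also a more basic structural problem: the subsequence $N_j=q_{j+1}$ cannot witness $\liminf_n P_n(\alpha)=0$ for the irrationals this theorem is really about. For instance, for $\alpha=[0;\overline{a}]$ with $a$ enormous, Theorem~\ref{thm:periodcase} guarantees $P_{q_n}(\alpha)\to C>0$, yet Theorem~\ref{thm:negresult} asserts the liminf is zero; the decay must therefore occur along integers that are \emph{not} convergent denominators. This is precisely what the paper's proof does: it applies Lubinsky's Proposition~\ref{prop:upperbound} to $N_m=\sum_{i=1}^{m}q_{n_i}$, a sum of $m$ well-separated convergent denominators chosen at positions where $a_{n_i}\geq K$. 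Each summand contributes at most $802+\tfrac{151\log M}{K}-\log K$ to $\log P_{N_m}(\alpha)$ via \eqref{eq:upperbound}, and for $M$ large one can pick $K=K(M)$ making this quantity negative, so that $\log P_{N_m}(\alpha)\leq -cm\to-\infty$. The decay is achieved by \emph{accumulating} a small negative contribution over many Ostrowski digits, not by making a single block decay; your single-digit choice $N_j=q_{j+1}$ gives, via the same proposition, only a bound of the form $C-\log a_{j+1}$, which tends to $-\infty$ only in the unbounded-coefficient case. To repair your argument you would essentially have to reprove the uniform perturbation estimates that constitute Proposition~\ref{prop:upperbound}, and then still switch to sums of many convergent denominators.
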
 
\begin{rem*}
Theorem \ref{thm:negresult} is a consequence of a result by Lubinsky (Proposition~\ref{prop:upperbound} below). Lubinsky himself claims in \cite{Lu99} that Theorem \ref{thm:negresult} is true for a general threshold $K$ independent of $M$. However, this is not rigorously proven, and we have not managed to verify it. Basing our argument on Proposition \ref{prop:upperbound} below, we do not see that the dependency on $M$ can be omitted.
\end{rem*}
\begin{prop}[{\cite[Proposition 5.1]{Lu99}}]
\label{prop:upperbound}
Let $\alpha =[0;a_1,a_2,\ldots]$, and for $n \in \N$ let $n=\sum_{j=1}^z b_jq_j$ be its Ostrowski expansion in base $\alpha$. Denote by $z^{\#}$ the length of this expansion
\begin{equation*}
z^{\#}=z^{\#}(n) = \# \left\{j \, : \, 1 \leq j \leq z \text{ and } b_j \neq 0 \right\}. 
\end{equation*}
We then have
\begin{equation}
\label{eq:upperbound}
\begin{aligned}
\log P_n(\alpha) &\leq 800z^{\#}+ 151 \sum_{j=1}^z \frac{b_j}{a_j} \max_{k<j} \log a_k+ \frac{3}{2} \sum_{j=1}^z\ \log^+ b_j \\
&+ \sum_{j=1}^z b_j \log \left( \frac{2\pi b_jq_j |q_j\alpha -p_j|}{e}\right),
\end{aligned}
\end{equation}
where $\log^+x = \max \{\log x, 0\}$.
\end{prop}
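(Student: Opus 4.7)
The plan is to exploit the Ostrowski representation of $n$ to decompose $P_n(\alpha)$ into a product over ``block products'' associated with the nonzero digits $b_j$. Ordering levels from highest to lowest and setting $k_{j,\ell} = \sum_{i>j} b_i q_i + (\ell-1)q_j$ for the cumulative offset, I would write
\[
P_n(\alpha) = \prod_{j\,:\,b_j>0} \prod_{\ell=1}^{b_j} B_{j,\ell}, \qquad B_{j,\ell} := \prod_{r=1}^{q_j} |2\sin\pi(k_{j,\ell}+r)\alpha|.
\]
This presents $\log P_n(\alpha)$ as a sum over the $z^\#$ levels of sums over the $b_j$ sub-blocks at each level, reducing the proposition to producing an upper bound on each $\log B_{j,\ell}$ in terms of $q_j$, the quantity $|q_j\alpha-p_j|$, and the accumulated shift $k_{j,\ell}\alpha$.

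For a single block, I would apply the three-distance theorem: the $q_j$ points $\{(k_{j,\ell}+r)\alpha\}_{r=1}^{q_j}$ partition the circle into arcs of at most three distinct lengths, each comparable to $|q_{j-1}\alpha-p_{j-1}|$ or $|q_j\alpha-p_j|$. Combined with the classical identity $\int_0^1 \log|2\sin\pi x|\,dx=0$, the log of a single block product splits into (i) a well-distributed ``bulk'' part with absolutely bounded per-block error---which, aggregated over the $z^\#$ levels, supplies the leading $800\,z^\#$ term---and (ii) a contribution from the at most two arguments at which $(k_{j,\ell}+r)\alpha$ is extremely close to an integer. In the unshifted case $k_{j,\ell}=0$, the smallest sine is comparable to $|q_j\alpha-p_j|$; since consecutive sub-blocks at level $j$ differ by a shift of $q_j\alpha\equiv\pm|q_j\alpha-p_j|\pmod 1$, their minimal sines form an approximate arithmetic progression, and a Stirling-type estimate for the product of the $b_j$ minima yields the summand $\sum_j b_j\log(2\pi b_jq_j|q_j\alpha-p_j|/e)$, with the $\tfrac{3}{2}\sum_j\log^+ b_j$ correction absorbing the Stirling remainder. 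For $k_{j,\ell}\neq 0$, an additional anomalously small sine can appear when the shift $k_{j,\ell}\alpha$ aligns with $-r\alpha\pmod 1$ for some $r\leq q_j$; the depth of such an anomaly is controlled by the higher-level Ostrowski digits composing $k_{j,\ell}$, and estimating the density $b_j/a_j$ of affected offsets together with the worst-case depth $O(\max_{k<j}\log a_k)$ accounts for the middle term $151\sum_j(b_j/a_j)\max_{k<j}\log a_k$.

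The main obstacle will be producing block-level estimates that are sharp and uniform enough in the offset $k_{j,\ell}$ to yield the explicit constants in \eqref{eq:upperbound}. The three-distance theorem provides the combinatorial skeleton, but each of the four summands requires its own analysis: a clean separation of ``bulk'' from ``extremal'' arguments in each block, a Stirling estimate for the product of extremal sines across the $b_j$ sub-blocks, and a worst-case analysis of how shifting a block by $k_{j,\ell}\alpha$ can produce one additional small sine without double-counting with the unshifted smallest term. Once these per-block bounds are in place, the proposition follows by summing over the $(j,\ell)$ pairs.
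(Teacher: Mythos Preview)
The paper does not contain a proof of this proposition: it is quoted verbatim from Lubinsky's paper \cite{Lu99} (as Proposition~5.1 there) and used as a black box to derive Theorem~\ref{thm:negresult}. There is therefore no in-paper proof to compare your proposal against.

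That said, your outline is broadly in the spirit of Lubinsky's original argument: the Ostrowski block decomposition of $P_n(\alpha)$, the identification of one or two ``small'' sines per block of length $q_j$ whose product across the $b_j$ sub-blocks is handled by a Stirling estimate, and a separate bulk estimate for the remaining factors are indeed the structural components of \cite[Section~5]{Lu99}. Two points deserve caution. First, your attribution of the middle term $151\sum_j (b_j/a_j)\max_{k<j}\log a_k$ to ``shift-induced extra small sines'' controlled by the \emph{higher}-level digits in $k_{j,\ell}$ does not match the index constraint $k<j$: in Lubinsky's analysis this term arises from estimating the bulk of each block against a discrete logarithmic sum, and the $\max_{k<j}\log a_k$ reflects how unevenly the points $\{r\alpha\}_{r\le q_j}$ can be spaced (governed by the \emph{earlier} partial quotients through the three-distance structure), not from the offset. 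Second, your proposal is explicitly a plan rather than a proof---you yourself flag that obtaining uniform per-block bounds with the stated numerical constants is ``the main obstacle,'' and nothing in the sketch indicates how the constants $800$, $151$, $3/2$ would emerge. As a roadmap it is reasonable; as a proof it is incomplete, and for the actual argument one must consult \cite{Lu99}.
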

\begin{rem*}
The fact that $\liminf_{n\to\infty} P_n(\alpha)=0$ whenever $\alpha=[0;a_1, a_2, \ldots ]$ has \emph{unbounded} continued fraction coefficients is a straightforward consequence of this proposition (as illustrated by Lubinsky in \cite{Lu99}). To see this, simply construct a strictly increasing subsequence of coefficients $a_{n_j}$ where 
\begin{equation*}
a_{n_j} > a_k \quad \text{ for all } k < n_j .
\end{equation*}
Then putting $n=N_j = q_{n_j}$ in \eqref{eq:upperbound}, it is easily verified that this inequality reduces to 
\begin{equation*}
\log P_{N_j}(\alpha) \leq C - \log a_{n_j}
\end{equation*}
for some absolute constant $C$, and since $a_{n_j} \to \infty$ as $j\to \infty$ it follows that 
\begin{equation*}
\lim_{j \to \infty} P_{N_j}(\alpha) = 0.
\end{equation*}
\end{rem*}
Let us now see how Theorem \ref{thm:negresult} is deduced from Proposition \ref{prop:upperbound}.
\begin{proof}[Proof of Theorem \ref{thm:negresult}]
Let $\alpha = [0;a_1, a_2, \ldots]$ with $M =\max_j a_j$, and suppose $a_j \geq K$ infinitely often for some natural number $K \leq M$. Denote by $(n_i)_{i\in \N}$ a sequence of indices such that $a_{n_i} \geq K$ for every $i$. We may choose this sequence so that 
\begin{equation*}
n_i -n_{i-1} > 1 \quad \text{ for all } i>1 .
\end{equation*}

Now construct a sequence of integers $N_m$ by letting $N_m = \sum_{i=1}^m q_{n_i}$. We have then given $N_m$ in its Ostrowski representation to base $\alpha$, as
\begin{equation*}
N_m = \sum_{i=1}^m q_{n_i} = \sum_{j=1}^{n_m} b_j q_j,
\end{equation*}
where
\begin{equation*}
b_j = \begin{cases}
1, \quad &\text{ if } j \in (n_i)_{i\in \N} \\
0, \quad &\text{ otherwise}
\end{cases}
,
\end{equation*}
and where no two consecutive coefficients $b_j$ are both nonzero. 

We now use Proposition \ref{prop:upperbound} to estimate $\log P_{N_m} (\alpha)$. Since $b_j \in \{0,1\}$, it is clear that the third term on the right hand side in \eqref{eq:upperbound} is zero. For the second term on the right hand side in \eqref{eq:upperbound}, we have the upper bound 
\begin{equation}
\label{eq:secterm}
151 \sum_{j=1}^{n_m} \frac{b_j}{a_j} \max_{k<j} \log a_k \leq 151 \log M \sum_{i=1}^{m} \frac{1}{a_{n_i}} \leq \frac{151 \log M}{K} m.
\end{equation}
Finally, for the fourth term on the right hand side in \eqref{eq:upperbound}, we observe that if $b_j=1$, then 
\begin{equation*}
b_j \log \left( \frac{2\pi b_jq_j |q_j\alpha -p_j|}{e} \right) \leq \log \left( \frac{2\pi q_j}{eq_{j+1}}\right) \leq \log \left( \frac{\pi q_j}{a_jq_j}\right) = \log \pi - \log a_j,
\end{equation*}
where for the first inequality we have used \eqref{eq:approxerror}. It follows that
\begin{equation}
\label{eq:fourterm}
\sum_{j=1}^{n_m} b_j \log \left( \frac{2\pi b_jq_j |q_j\alpha -p_j|}{e} \right) \leq (\log \pi-\log K)m ,
\end{equation}
and inserting \eqref{eq:secterm} and \eqref{eq:fourterm} in \eqref{eq:upperbound}, we arrive at
\begin{equation}
\label{eq:posneg}
\log P_{N_m} (\alpha) \leq \left( 802 + \frac{151 \log M}{K} - \log K\right)m . 
\end{equation}
If $M$ is sufficiently small, then the right hand side in \eqref{eq:posneg} is positive regardless of the size of $K\leq M$. However, once $M$ is sufficiently large, one can find $K=K(M)$ such that
\begin{equation*}
802 + \frac{151 \log M}{K} - \log K < 0 .
\end{equation*}
In this case, it is clear from \eqref{eq:posneg} that 
\begin{equation*}
\log P_{N_m} (\alpha) \to -\infty
\end{equation*}
as $m\to \infty$, and accordingly
\begin{equation*}
\lim_{m \to \infty} P_{N_m}(\alpha)=0 .
\end{equation*}
This concludes the proof of Theorem \ref{thm:negresult}.
\end{proof}

\subsection{Irrationals of the form $\alpha=[0;\overline{a}]$}
Let us finally have an extra look at irrationals of the form 
\begin{equation*}
\alpha = [0; \overline{a}].
\end{equation*}
For this special case, we have $M=K=a$ in Theorem \ref{thm:negresult}, and it is clear from the proof that $\liminf_{n\to \infty} P_n(\alpha)=0$ if 
\begin{equation*}
802+\frac{151\log a}{a} - \log a < 0,
\end{equation*}
or equivalently if $a \geq e^{802+\varepsilon}$ for some small $\varepsilon>0$. 

Studying the product $P_n(\alpha)$ numerically, it appears that the true lower bound on $a$ for when $\liminf_{n\to\infty} P_n(\alpha) = 0$ might actually be significantly lower. 
In Table~\ref{tab:minima}, we have listed the evolution of minima of $P_n(\alpha)$ for $\alpha=[0; \overline{a}]$, $a=1,2, \ldots, 8$, determined numerically. 

\begin{table}[htb]
\centering
\begin{tabular}{c|c}
$\alpha$ & Evolution of minima $(P_n(\alpha), n)$\\
\hline
$[0;\overline{1}]$ & (1.865, 1) \\
$[0;\overline{2}]$ & (1.928, 1) \\
$[0;\overline{3}]$ & (1.333, 1) \\
$[0;\overline{4}]$ & (1.351, 1) \\
$[0;\overline{5}]$ & (1.138, 1) \\
$[0;\overline{6}]$ & (0.977, 1), (0.907, 7), (0.849, 44), (0.794, 272), (0.742, 1\,677), (0.693, 10\,335)\\
$[0;\overline{7}]$ & (0.852, 1), (0.708, 8), (0.589, 58), (0.491, 415), (0.408, 2\,964), (0.340, 21\,164)\\
$[0;\overline{8}]$ & (0.755, 1), (0.564, 9), (0.422, 74), (0.316, 602), (0.236, 4\,891), (0.177, 39\,731) \\
\end{tabular}
\caption{Evolution of minima of $P_n(\alpha)$ for $n=1, \ldots , 50\,000$. \label{tab:minima}}
\end{table}
It is curious that for $a\leq 5$, we have
\begin{equation*}
\min_{1\leq n \leq 50\,000} P_n(\alpha) = P_1(\alpha),
\end{equation*}
whereas for $a>5$, the minimal value of $P_n(\alpha)$ is decreasing slowly with increasing $n$. The apparent change in behaviour at the cutoff $a=5$ leads us to close by posing the following conjecture.
\begin{con}
Let $\alpha = [0;\overline{a}]$. If $a \leq 5$, then 
\begin{equation*}
\liminf_{n \to \infty} P_n(\alpha) \geq P_1(\alpha) > 0.
\end{equation*}
If $a>5$, then 
\begin{equation*}
\liminf_{n\to \infty} P_n(\alpha) = 0.
\end{equation*}
\end{con}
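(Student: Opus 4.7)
The conjecture naturally splits into two statements of quite different character: (A) for $a \leq 5$, the quantitative lower bound $\liminf_{n\to\infty} P_n(\alpha) \geq P_1(\alpha)$; and (B) for $a \geq 6$, the vanishing $\liminf_{n\to\infty} P_n(\alpha) = 0$. I would attack (B) first, since it is structurally closer to the existing proof of Theorem~\ref{thm:negresult}. The obstacle there is purely quantitative: the constants $800$, $151$, and $3/2$ appearing in Proposition~\ref{prop:upperbound} yield a threshold of roughly $a \geq e^{802+\varepsilon}$, whereas the conjectured threshold is $a = 6$. The plan is to re-derive Lubinsky's estimates tailored to the class $\alpha = [0;\overline{a}]$, exploiting the strict periodicity to replace worst-case inequalities by exact asymptotics. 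Since $(q_j)$ satisfies the linear recurrence $q_{j+1} = a q_j + q_{j-1}$, the quantities $q_j|q_j\alpha - p_j|$ and the fractional parts appearing as perturbations in the Ostrowski decomposition can be written exactly in terms of powers of $\gamma = (a+\sqrt{a^2+4})/2$. Inserting these exact expressions into the analogue of inequality \eqref{eq:posneg}, applied to a sequence $N_m = \sum_{i=1}^m q_{n_i}$ with well-spaced indices, should shrink the coefficient in front of $m$ dramatically.

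For statement (A), the approach would extend the techniques of Theorem~\ref{thm:liminfphi}. Since $\alpha$ has period $\ell = 1$, Theorem~\ref{thm:periodcase} gives $P_{q_m}(\alpha) \to C_a > 0$, and the numerics in Figure~\ref{fig:convergenceex} and Table~\ref{tab:minima} indicate that $C_a > P_1(\alpha)$ when $a \leq 5$. For general $N$ with Ostrowski expansion $N = \sum_{j=1}^z b_j q_j$, one can use the block factorization
\begin{equation*}
P_N(\alpha) = \prod_{j=1}^z \prod_{s=0}^{b_j-1}\prod_{r=1}^{q_j} \bigl| 2\sin\pi(r\alpha + k_{j,s}\alpha)\bigr|,
\end{equation*}
where each inner product is a perturbation of $P_{q_j}(\alpha)$ and $k_{j,s}$ is the cumulative offset. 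The plan is to prove a uniform lower bound of $1$ for every perturbed block with $j$ above a threshold $J = J(a)$, mirroring the argument of Theorem~\ref{thm:liminfphi}, and then verify by direct computation that the finitely many remaining blocks with $j < J$ together contribute at least $P_1(\alpha)$.

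The main obstacle in (A) is pinning down the \emph{sharp} constant $P_1(\alpha)$ rather than just some positive value. Table~\ref{tab:minima} hints at the much stronger statement that $P_n(\alpha) \geq P_1(\alpha)$ holds for \emph{every} $n\geq 1$ when $a \leq 5$, not merely in the limit. This invites an inductive approach on the length of the Ostrowski representation of $N$, with base case $N = 1$; the inductive step would require precise control of the multiplicative effect of appending a block $b_j q_j$, and handling the small-$j$ blocks, where the perturbation lemma of Theorem~\ref{thm:liminfphi} fails to give a clean bound, is the genuinely delicate part. For (B), the main obstacle is closing the gap between $e^{802}$ and $6$; while the periodicity of $[0;\overline{a}]$ helps, it is conceivable that one needs an essentially new upper bound on $P_n(\alpha)$ that tracks the full multiplicative interaction of the Ostrowski digits rather than estimating their contributions digit-by-digit, and developing such a bound is, in my view, where the real difficulty of the conjecture lies.
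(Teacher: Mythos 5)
This statement is posed in the paper as a \emph{conjecture}, supported only by the numerical evidence in Table~\ref{tab:minima}; the paper contains no proof, and neither does your proposal. What you have written is a research plan, and while it is a sensible one, both halves stop exactly where the real difficulty begins. For part (B), observe that specializing \eqref{eq:posneg} to $M=K=a=6$ gives the coefficient $802 + 151\log 6/6 - \log 6 \approx 845$, which is positive by three orders of magnitude; the dominant obstruction is not the worst-case treatment of $q_j|q_j\alpha - p_j|$ (which your exact formulas in terms of $\gamma=(a+\sqrt{a^2+4})/2$ would indeed sharpen, but which only affects the $-\log K$ term) but the additive $800z^{\#}$ term in \eqref{eq:upperbound}, which would have to be reduced from $800$ per nonzero digit to below roughly $\log 6 \approx 1.79$ per nonzero digit. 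That is not a refinement of Lubinsky's estimate; it requires a structurally new upper bound, as you yourself concede in your final sentence. For part (A), the block factorization you propose is the same decomposition as \eqref{eq:doubleprod} used for Theorem~\ref{thm:liminfphi}, but that argument only yields \emph{some} positive lower bound $K_1^J$; upgrading it to the sharp constant $P_1(\alpha)$ requires showing that the finitely many low-index perturbed blocks never multiply to anything below $P_1(\alpha)/(\text{product of high-index blocks})$, and you give no mechanism for this beyond ``verify by direct computation,'' which cannot be done for the infinitely many admissible digit patterns $(b_1,\ldots,b_{J})$ without an additional uniformity argument. You have correctly identified where the difficulties lie, but identifying them is not the same as resolving them: as submitted, this establishes neither direction of the conjecture.
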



%

\Addresses

\end{document}